\newcommand{\1}{1\!\!\,{\rm I}}
\renewcommand{\lg}{\langle}
\newcommand{\rg}{\rangle}
\newcommand{\cE}{{\mathcal E}}
\newcommand{\cT}{{\mathcal T}}
\newcommand{\mbR}{{\mathbb R}}
\newcommand{\wt}{\widetilde}
\newcommand{\vf}{\varphi}
\newcommand{\ve}{\varepsilon}
\newcommand{\ov}{\overline}
\theoremstyle{plain}
\newtheorem{thm}{Theorem}
\newtheorem{lem}{Lemma}
\theoremstyle{definition}
\newtheorem{defn}{Definition}
\newtheorem{expl}{Example}
\theoremstyle{remark}
\newcommand{\Var}{\mathop{\rm Var}}
\begin{document}\large
\renewcommand{\proofname}{Proof.}
\begin{center}
{\Large\bf
On regularization of formal Fourier--Wiener
transform of the self-intersection local time of planar
Gaussian process
}\\[1cm]
A.A.Dorogovtsev, O.L.Izyumtseva
\end{center}
\vskip20pt
Abstract: Fourier--Wiener transform of the formal
expression for multiple self-intersection local time is described in
terms of the integral, which is divergent on the diagonals. The method of
regularization we use in this work related to regularization of
functions with non-integrable singularities. The strong local nondeterminism property, which is more restrictive than the property of local nondeterminism introduced by S.Berman is
considered. Its geometrical meaning in the construction of the regularization is investigated. As the example the problem of regularization is solved for the compact perturbation of the planar Wiener process.

Key words: multiple self-intersection local
time, Fourier--Wiener transform, local nondeterminism.

AMS Mathematical Subject Classification. 60G15, 60H40.

The present paper is devoted to the multiple self-intersection local
time for planar Gaussian process. To define it we use
Fourier--Wiener transform. Fourier--Wiener transform of the formal
expression for multiple self-intersection local time is described in
terms of the integral of the ratio of two functions, where
denominator turns to zero on the diagonals. That is why this
integral must be regularized in some way. The method of
regularization we use in this work related to regularization of
functions with non-integrable singularities \cite{1} in the theory
of generalized functions. To present such a regularization in the
case of an arbitrary Gaussian process we introduce the property of
strong local nondeterminism which plays key role in the construction
of the regularization. Among a large number of works devoted to the
self-intersection local time for the random processes, let us recall
the papers related to our work. The problem of regularization of
self-intersection local time for planar Wiener process described in
\cite{2, 3}. In \cite{2}  Dynkin for
$f_\ve(x)=\frac{1}{2\pi\ve}e^{-\frac{\|x\|^2}{2\ve}}, \ve>0,
x\in\mbR^2$ considered the expression
$$
T^w_{\ve, k}=\int_{\Delta_k}\prod^{k-1}_{i=1}
f_\ve(w(s_{i+1})-w(s_i))d\vec{s}, \
\Delta_k=\{0\leq s_1\leq\ldots\leq s_k\leq1\}
$$
which ``blows up''  when $\ve\to0+.$ He proved that under the
right choice of the coefficients $B^l_k(\ve),$ the random variable
$$
\cT^w_k=L_p\mbox{-}\lim_{\ve\to0+}
\left[
T^w_{\ve, k}+\sum^{k-1}_{l=1}B^l_k(\ve)T^w_{\ve, l}
\right]
$$
is well defined.

In \cite{3} J.Rosen showed that there exists
$$
\int_{\Delta_k}\prod^{k-1}_{i=1}
(\delta_0(w(s_{i+1})-w(s_i)))-E\delta_0(w(s_{i+1})-w(s_i))d\vec{s}:=
$$
$$
=L_2\mbox{-}\lim_{\ve\to0+}
\left[
\int_{\Delta_k}\prod^{k-1}_{i=1}
(f_\ve(w(s_{i+1})-w(s_i))-Ef_\ve(w(s_{i+1})-w(s_i)))d\vec{s}
\right].
$$
The existence of multiple points of paths of Brownian motion in the plane,
Markov processes in a complete metric space and Gaussian processes is proved in \cite{4}--\cite{6}
correspondingly. The concept of local nondeterminism for Gaussian process
is considered in [7, 9]. Fourier--Wiener transform of Brownian functionals is
widely discussed in \cite{9}.

 The work consists of three parts. The necessity of
regularization  of formal expression for Fourier--Wiener transform
of the self-intersection local time for planar Gaussian process is
established in section 1.  As an example the problem of
regularization is considered for the case of planar Wiener process.

In section 2 we introduce the modification of the local
nondeterminism property which we call the strong local
nondeterminism. Here we consider geometrical meaning of this
property describing the joint behavior of the increments of the
process. The main example of Gaussian process with the strong local
nondeterminism is a compact perturbation of the Wiener process. In
Section 2 we present examples of such processes arising as a
solution to Sturm--Liuville problem with the white noise in the
right part.

In the section 3 we present the main result of the article about the
regularization of the Fourier--Wiener transform for the
self-intersection local time of the planar Gaussian process.

{\bf1. Formal expression of Fourier--Wiener transform for self-intersection local
time of  Gaussian process}

Let $\{x(t); t\in[0, 1]\}$   be a continuous in the square mean planar
Gaussian process with the mean zero. The main object of our investigation
is the following expression
\begin{equation}
\label{eq1.1}
T^x_k=
\int_{\Delta_k}\prod^{k-1}_{i=1}
\delta_0(x(s_{i+1})-x(s_i))d\vec{s},
\end{equation}
where $\delta_0$ is the delta-function at the point zero. The expression
\eqref{eq1.1}  is the formal definition of $k$-multiple self-intersection
local time for the process $x$ on the time interval $[0; 1].$

We will consider in \eqref{eq1.1} an action of $\delta_0$ on the
functionals from the white noise and use for its study the
well-developed tools from Gaussian analysis. Suppose that $H$ is a
real separable Hilbert space. The inner product in $H$ is denoted by
$(\cdot, \cdot).$

Let $g\in C([0; 1], H)$   be a such function that the linear span of its values is
dense in $H.$   Consider two independent Gaussian white noises in $H:$
$\xi_1$ and $\xi_2$  \cite{11}. Recall that Gaussian white noise $\xi$  in
$H$ is a family of jointly Gaussian random variables $\{(h, \xi); h\in H\}$
linearly depending on $h$  and such that
$$
E(h, \xi)=0, \ E(h, \xi)^2=\|h\|.
$$
Define the Gaussian process $x$ as follows
$$
x(t)=((g(t), \xi_1), (g(t), \xi_2)).
$$

To investigate \eqref{eq1.1}   consider its Fourier--Wiener transform. For
$h_1, h_2\in H$  let us denote by $\cE(h_1, h_2)$   the stochastic exponent
$$
\cE(h_1, h_2)=e^{\lg h_1, \xi_1\rg+ \lg h_2, \xi_2\rg-\frac{1}{2}
(\|h_1\|^2+\|h_2\|^2)}.
$$
Recall the following definition \cite{12}.
\begin{defn}
\label{defn1.1}
$\cT(\alpha)(h_1, h_2):=E\alpha\cE(h_1, h_2) $    is called Fourier--Wiener
transform of the random variable $\alpha.$
\end{defn}

Let us notice that a delta-function of Gaussian random variable as a
generalized Gaussian functional was considered by the following
authors \cite{9, 13}. Let us give meaning to the expression
$\prod^{k-1}_{i=1}\delta_0(x(s_{i+1})-x(s_i))$ by approximation of
delta-function with the following family of functions
$$
f_\ve(x)=\frac{1}{2\pi\ve}e^{-\frac{\|x\|^2}{2\ve}}, \ve>0, x\in\mbR^2.
$$

Consider approximating values
$\prod^{k-1}_{i=1}f_\ve(x(s_{i+1})-x(s_i)).$
It is not difficult to prove that there exists a limit
$$
\cT\left(
\prod^{k-1}_{i=1}\delta_0(x(s_{i+1})-x(s_i))\right)(h_1,h_2):=
$$
$$
=\lim_{\ve\to0+}
\cT\left(
\prod^{k-1}_{i=1}f_\ve(x(s_{i+1})-x(s_i))\right)(h_1,h_2)=
$$
\begin{equation}
\label{eq1.3}
= \frac
{e^
{-\frac{1}{2}(A^{-1}_{t_1\ldots t_k}(\vec{u}_1, \vec{u}_1)+
A^{-1}_{t_1\ldots t_k}(\vec{u}_2, \vec{u}_2)
)}}{\Gamma_{t_1\ldots t_k}},
\end{equation}
 where
 $$
 \Delta g(t_l)=g(t_{l+1})-g(t_{l}),l=1,\ldots ,k-1,
 $$
$$\vec{u}_i=((\Delta g(t_1), h_i), \ldots, (\Delta g(t_{k-1}), h_i)),
i=1,2,
$$
$$
A_{t_1\ldots t_k}=
(\Delta g(t_l), \Delta g(t_j))^{k-1}_{lj=1}.
$$
During the whole  article  we use the following notations.
$\Gamma_{t_1\ldots t_k}$ is a Gram determinant constructed on $\Delta g(t_1), \ldots, \Delta g(t_{k-1}).$ Also we suppose, that the following condition is fulfilled. For any $0\leq t_1<t_2<\ldots <t_k\leq 1 $
$$
\Gamma_{t_1\ldots t_k}\ne0.
$$
$P_{t_1\ldots t_k}$ is a projection on the linear span of
$
(\Delta g(t_1) \ldots, \Delta g(t_{k-1})).
$
It can be checked that the following lemma holds.

\begin{lem}
\label{lem1.1}
$$
A^{-1}_{t_1\ldots t_k}(\vec{u}_1, \vec{u}_1)=\|P_{t_1\ldots t_k}h_1\|^2, 
$$
 if $A^{-1}_{t_1\ldots t_k}$ exists.
\end{lem}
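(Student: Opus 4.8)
\emph{Proof proposal.} The plan is to reduce the identity to the standard least-squares characterization of an orthogonal projection onto a finite-dimensional subspace. Put $V=\mathrm{span}(\Delta g(t_1),\ldots,\Delta g(t_{k-1}))\subset H$. By the standing assumption $\Gamma_{t_1\ldots t_k}\ne0$ the vectors $\Delta g(t_1),\ldots,\Delta g(t_{k-1})$ are linearly independent, so $\dim V=k-1$; moreover $\det A_{t_1\ldots t_k}=\Gamma_{t_1\ldots t_k}\ne0$, so $A_{t_1\ldots t_k}$ is invertible, which is exactly the hypothesis ``$A^{-1}_{t_1\ldots t_k}$ exists''. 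Since $P_{t_1\ldots t_k}h_1\in V$, there is a unique coefficient vector $c=(c_1,\ldots,c_{k-1})$ with $P_{t_1\ldots t_k}h_1=\sum_{l=1}^{k-1}c_l\,\Delta g(t_l)$.

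First I would determine $c$ from the normal equations. The defining property of the orthogonal projection is $h_1-P_{t_1\ldots t_k}h_1\perp V$, i.e. $(h_1-P_{t_1\ldots t_k}h_1,\Delta g(t_j))=0$ for every $j=1,\ldots,k-1$. Expanding the inner product, $(h_1,\Delta g(t_j))=\sum_{l}c_l(\Delta g(t_l),\Delta g(t_j))$, which in matrix form reads $\vec u_1=A_{t_1\ldots t_k}c$; hence $c=A^{-1}_{t_1\ldots t_k}\vec u_1$. Then the norm computation is immediate: using that $P_{t_1\ldots t_k}$ is a self-adjoint idempotent,
$$
\|P_{t_1\ldots t_k}h_1\|^2=(P_{t_1\ldots t_k}h_1,h_1)=\sum_{l=1}^{k-1}c_l(\Delta g(t_l),h_1)=(c,\vec u_1)=(A^{-1}_{t_1\ldots t_k}\vec u_1,\vec u_1),
$$
and the last expression is precisely $A^{-1}_{t_1\ldots t_k}(\vec u_1,\vec u_1)$ in the quadratic-form notation of the lemma, the pairing $(\cdot,\cdot)$ on $\mbR^{k-1}$ being the usual dot product.

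There is no genuine obstacle here — the statement just repackages the normal-equations description of a projection — so the ``main step'' is really only a matter of bookkeeping. The one point worth a line of care is notational consistency: one must read $A^{-1}_{t_1\ldots t_k}(\vec u_1,\vec u_1)$ as the value on $\vec u_1$ of the quadratic form of the symmetric positive-definite matrix $A^{-1}_{t_1\ldots t_k}$, and check that ``$A^{-1}_{t_1\ldots t_k}$ exists'' coincides with the assumption $\Gamma_{t_1\ldots t_k}\ne0$ used throughout. Alternatively one can argue coordinate-free by writing $P_{t_1\ldots t_k}h_1=\sum_{l,j}(A^{-1}_{t_1\ldots t_k})_{lj}(h_1,\Delta g(t_j))\,\Delta g(t_l)$ and contracting against $h_1$, but the normal-equations route above is the cleanest.
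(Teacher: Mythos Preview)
Your proof is correct. Both you and the paper reduce the claim to the identity $A^{-1}_{t_1\ldots t_k}(\vec u_1,\vec u_1)=(P_{t_1\ldots t_k}h_1,h_1)$, but the justifications differ slightly: you solve the normal equations $A_{t_1\ldots t_k}c=\vec u_1$ to get the projection coefficients directly and then pair with $\vec u_1$, whereas the paper writes the quadratic form via the cofactor expansion $A^{-1}_{t_1\ldots t_k}(\vec u_1,\vec u_1)=\Gamma^{-1}_{t_1\ldots t_k}\sum_{i,j}(-1)^{i+j}M_{ij}(\Delta g(t_i),h_1)(\Delta g(t_j),h_1)$, defines the associated operator $B_{t_1\ldots t_k}h_1=\Gamma^{-1}_{t_1\ldots t_k}\sum_{i,j}(-1)^{i+j}M_{ij}(\Delta g(t_i),h_1)\Delta g(t_j)$, and checks by hand that $B_{t_1\ldots t_k}$ annihilates $V^\perp$ and fixes each $\Delta g(t_i)$, hence equals $P_{t_1\ldots t_k}$. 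Your normal-equations route is shorter and avoids the explicit adjugate bookkeeping; the paper's route has the small advantage of exhibiting the projection operator itself (your ``coordinate-free'' alternative at the end is exactly this). Either way there is nothing to add.
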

\begin{proof}

$$
A^{-1}_{t_1\ldots t_k}(\vec{u}_1, \vec{u}_1)=
\frac{1}{\Gamma_{t_1\ldots t_k}}
\sum^{k-1}_{ij=1}(-1)^{i+j}M_{ij}
(\Delta g(t_i),h_1)(\Delta g(t_j), h_1),
$$
where $M_{ij}$ is the minor of the matrix $A_{t_1\ldots t_k}$
corresponding to a line $i$  and a column $j.$   Let us define
$B_{t_1\ldots t_k}$   as follows
$$
B_{t_1\ldots t_k}h_1=
\frac{1}{\Gamma_{t_1\ldots t_k}}
\sum^{k-1}_{ij=1}(-1)^{i+j}M_{ij}
(\Delta g(t_i),h_1)\Delta g(t_j).
$$
It is not difficult to check that

1) For any $h_1\perp\Delta g(t_1), \ldots, \Delta g(t_{k-1}) $
$$
B_{t_1\ldots t_k}h_1=0,
$$

2) For any $i=\ov{1, k-1}$
$$
B_{t_1\ldots t_k}\Delta g(t_i)=\Delta g(t_i).
$$
Conditions 1), 2) imply that
$$
B_{t_1\ldots t_k}=P_{t_1\ldots t_k}.
$$

 To finish the proof it is enough to note that
$$
A^{-1}_{t_1\ldots t_k}(\vec{u}_1, \vec{u}_1)=
(B_{t_1\ldots t_k}h_1, h_1).
$$
Lemma is proved.
\end{proof}

It follows from lemma \ref{lem1.1}   that for
$\vec{t}=(t_1, \ldots, t_k)\in\Delta_k:$
\begin{equation}
\label{eq1.4}
\cT\left(
\prod^{k-1}_{i=1}\delta_0(x(s_{i+1})-x(s_i))\right)(h_1,h_2)=
 \frac
{e^
{-\frac{1}{2}(\|P_{t_1\ldots t_k}h_1\|^2+\|P_{t_1\ldots t_k}h_2\|^2
)
}}{\Gamma_{t_1\ldots t_k}}.
\end{equation}

Consider expression \eqref{eq1.4}  in the case of planar Wiener process.
Here we use $H=L_2([0, 1]).$   Then one can define a
Wiener process as
$w(t)=((\1_{[0, t]}, \xi_1), (\1_{[0, t]}, \xi_2)),$ where $\xi_1$
and $\xi_2$  are independent white noises in $L_2([0, 1]).$
Now \eqref{eq1.4}    has the following form
\begin{equation}
\label{eq1.5}
\cT\left(
\prod^{k-1}_{i=1}\delta_0(w(s_{i+1})-w(s_i))\right)(h_1,h_2)=
 \frac
{e^
{-\frac{1}{2}(\sum^{k-1}_{i=1}\|P_{t_it_{i+1}}h_1\|^2+
\sum^{k-1}_{i=1}\|P_{t_it_{i+1}}h_2\|^2
)
}}{
\prod^{k-1}_{i=1}(t_{i+1}-t_i)
}.
\end{equation}

The next statement describes the regularization of \eqref{eq1.5}.

\begin{thm}
\label{thm1.1}
The following integral is finite
$$
\int_{\Delta_k} \frac { \sum_{M\subset\{1,\ldots, k-1\}}
(-1)^{|M|}e^ {-\frac{1}{2}(\sum_{i\in M}\|P_{t_it_{i+1}}h_1\|^2+
\sum_{i\in M}\|P_{t_it_{i+1}}h_2\|^2 ) }} {
\prod^{k-1}_{i=1}(t_{i+1}-t_i) }d\vec{t}.
$$
\end{thm}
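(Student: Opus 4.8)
The plan is to collapse the alternating sum into a product, to dominate that product by a product of squared averaging operators, and then to integrate out the ``gap'' variables one at a time by means of the one--dimensional Hardy inequality. First I would rewrite the numerator: putting $a_i:=\frac12\bigl(\|P_{t_it_{i+1}}h_1\|^2+\|P_{t_it_{i+1}}h_2\|^2\bigr)\ge 0$ for $i=1,\dots,k-1$ and using the elementary identity $\sum_{M\subset S}\prod_{i\in M}x_i=\prod_{i\in S}(1+x_i)$ with $x_i=-e^{-a_i}$, the numerator equals $\prod_{i=1}^{k-1}\bigl(1-e^{-a_i}\bigr)$. Hence the integrand is the \emph{nonnegative} function $\prod_{i=1}^{k-1}\frac{1-e^{-a_i}}{t_{i+1}-t_i}$, and Tonelli's theorem is available throughout.

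Next I would use the explicit form of the Wiener case. Since $\Delta g(t_i)=\1_{[t_i,t_{i+1}]}$ and these functions are mutually orthogonal, $P_{t_it_{i+1}}$ is the rank--one orthogonal projection onto $\1_{[t_i,t_{i+1}]}$, so that
$$
\|P_{t_it_{i+1}}h\|^2=\frac{1}{t_{i+1}-t_i}\left(\int_{t_i}^{t_{i+1}}h(s)\,ds\right)^{2}.
$$
Combining the elementary bound $0\le 1-e^{-a}\le a$ $(a\ge0)$ with the Cauchy--Schwarz inequality and with $\bigl(\int_{t_i}^{t_{i+1}}h_1\bigr)^{2}+\bigl(\int_{t_i}^{t_{i+1}}h_2\bigr)^{2}\le\bigl(\int_{t_i}^{t_{i+1}}g\bigr)^{2}$, where $g:=|h_1|+|h_2|\in H$, one obtains the pointwise estimate
$$
\frac{1-e^{-a_i}}{t_{i+1}-t_i}\le\frac{a_i}{t_{i+1}-t_i}\le\frac12\left(\frac{1}{t_{i+1}-t_i}\int_{t_i}^{t_{i+1}}g(s)\,ds\right)^{2}.
$$

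Then comes the core of the argument. Changing variables to $u=t_1$, $r_i=t_{i+1}-t_i$, one has $t_i=u+r_1+\dots+r_{i-1}$, so that the $i$-th factor of the above product involves $r_i$ but none of $r_{i+1},\dots,r_{k-1}$. I would integrate out $r_{k-1}$, then $r_{k-2}$, and so on down to $r_1$, keeping all the other variables frozen; at each stage the resulting one--dimensional integral is of Hardy type,
$$
\int_0^{\infty}\left(\frac1x\int_0^{x}f(t)\,dt\right)^{2}dx\le 4\int_0^{\infty}f(x)^{2}\,dx,
$$
applied with $f(s)=g(t_i+s)$ ($g$ extended by zero and the upper limit enlarged to $+\infty$, which only increases the value), and hence contributes a factor at most $4\|g\|^{2}$. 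After these $k-1$ steps only $\int_0^1 du=1$ remains, so the integral in the theorem is bounded by $2^{-(k-1)}\bigl(4\|g\|^{2}\bigr)^{k-1}=2^{\,k-1}\|g\|^{2(k-1)}\le 2^{\,k-1}\bigl(\|h_1\|+\|h_2\|\bigr)^{2(k-1)}<\infty$.

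The main obstacle, and in fact the only delicate point, is the bookkeeping in this last step: one must integrate the gap variables from the innermost $r_{k-1}$ outward, so that when the $i$-th factor is being integrated its base point $t_i$ is a constant with respect to the integration variable $r_i$; this is exactly what turns each step into a genuine one--dimensional Hardy inequality. One should also check that the coupling of the limits of integration through the constraint $u+r_1+\dots+r_{k-1}\le 1$ may simply be dropped, since enlarging these limits (and extending $g$ by zero) works in our favour. The remaining ingredients --- the inequality $1-e^{-a}\le a$, the Cauchy--Schwarz inequality, and positivity for Tonelli --- are routine.
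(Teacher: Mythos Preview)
Your argument is correct and follows essentially the same route as the paper's proof: both collapse the alternating sum into the product $\prod_{i=1}^{k-1}(1-e^{-a_i})$, bound each factor by $a_i$, and then reduce the problem to showing that $\int_{t_{k-1}}^{1}\bigl(\tfrac{1}{t_k-t_{k-1}}\int_{t_{k-1}}^{t_k}f\bigr)^{2}dt_k\le C\|f\|^{2}$ and iterate. The only real differences are cosmetic: you invoke the classical Hardy inequality (constant $4$) directly, whereas the paper establishes the same one--dimensional bound via the Schur test for the kernel $k(s_1,s_2)=\tfrac{1}{s_2-t_{k-1}}\1_{\{s_2>s_1\}}$ (obtaining constant $8$); and you merge $h_1,h_2$ into $g=|h_1|+|h_2|$, while the paper first reduces to a single $h$. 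One small remark: the inequality $(\int h_1)^2+(\int h_2)^2\le(\int g)^2$ you use is just $a^2+b^2\le(a+b)^2$ for $a,b\ge0$ after passing to absolute values, not Cauchy--Schwarz.
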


\begin{proof}
It is enough to check that the following integral exists
$$
$$
$$
\int_{\Delta_k} \Bigg|\frac { \sum_{M\subset\{1,\ldots, k-1\}}
(-1)^{|M|}e^ {-\frac{1}{2}(\sum_{i\in M}\|P_{t_it_{i+1}}h_1\|^2) }}
{ \prod^{k-1}_{i=1}(t_{i+1}-t_i) }\Bigg|d\vec{t}=
$$
$$
$$
$$
\int_{\Delta_k}
\Bigg|
\frac
{
\prod^{k-1}_{i=1}
(e^
{-
\|P_{t_it_{i+1}}h_1\|^2}-1
)
}
{
\prod^{k-1}_{i=1}(t_{i+1}-t_i)
}
\Bigg|
dt\leq
$$
$$
\leq
\int_{\Delta_k}
\frac
{
\prod^{k-1}_{i=1}
\|P_{t_it_{i+1}}h_1\|^2}
{
\prod^{k-1}_{i=1}(t_{i+1}-t_i)
}
d\vec{t}=
$$
\begin{equation}
\label{eq1.7}
=
\int_{\Delta_k}
\frac
{
\prod^{k-1}_{i=1}
\left(\int^{t_{i+1}}_{t_i}h_1(s)ds\right)^2}
{
\prod^{k-1}_{i=1}(t_{i+1}-t_i)^2
}
d\vec{t}.
\end{equation}

Let us prove that the integral \eqref{eq1.7}  converges. It is
sufficient to consider the case $h_1\geq0.$ Let us check that
$$
\int^1_{t_{k-1}}
\frac
{
\left(\int^{t_{k}}_{t_{k-1}}h_1(s)ds\right)^2}
{
(t_k-t_{k-1})^2
}dt_k\leq C\|h_1\|^2.
$$

It is not difficult to see
$$
\int^1_{t_{k-1}}
\frac
{
\left(\int^{t_{k}}_{t_{k-1}}h_1(s)ds\right)^2}
{
(t_k-t_{k-1})^2
}dt_k=
$$
$$
=
\int^1_{t_{k-1}}
\iint^{t_k}_{t_{k-1}}
h_1(s_1)h_1(s_2)ds_1ds_2
\frac{1}{t_k-t_{k-1}}dt_k=
$$
$$
=
\iint^{1}_{t_{k-1}}
h_1(s_1)h_1(s_2)
\int^1_{s_1\vee s_2}
\frac{1}{(t_k-t_{k-1})^2}dt_k=
dt_kds_1ds_2\leq
$$
$$
\leq
\iint^{t_k}_{t_{k-1}}
h_1(s_1)h_1(s_2)
\frac{1}{(
{s_1\vee s_2}
-t_{k-1})}
ds_1ds_2=
$$
$$
=2
\int^1_{t_{k-1}}h_1(s_1)
\int^1_{s_1}
\frac{h_1(s_2)}{s_2-t_{k-1}}ds_2ds_1.
$$

Consider in $L_2([t_{k-1}; 1])$ integral operator with the kernel
$$
k(s_1, s_2)=\frac{1}{s_2-t_{k-1}}\1_{\{s_2>s_1\}}.
$$
Let us check that $k$  defines bounded operator in $L_2([t_{k-1};1])$
using Shur test \cite{10}.   If there exist positive functions
$p, q: [t_{k-1}; 1]\to(0,+\infty)$  and $\alpha, \beta$   such that
$$
\int^1_{t_{k-1}}
k(s_1, s_2)q(s_2)ds_2\leq \alpha p(s_1),
$$
$$
\int^1_{t_{k-1}}
k(s_1, s_2)p(s_1)ds_1\leq \beta q(s_2),
$$
then $k$  corresponds to bounded operator with the norm less or equal to
$\alpha\beta.$  Put
$$
p(s_1)=\frac{1}{\sqrt{s_1-t_{k-1}}}, \
q(s_2)=\frac{1}{\sqrt{s_2-t_{k-1}}}.
$$
Then
$$
\int^1_{t_{k-1}}
k(s_1, s_2)q(s_2)ds_2=
$$
$$
=
\int^1_{s_1}
\frac{1}{(s_2-t_{k-1})^{3/2}}ds_2 \leq
2\frac{1}{\sqrt{s_1-t_{k-1}}},
$$
$$
\int^1_{t_{k-1}}
k(s_1, s_2)p(s_1)ds_1=
\int^{s_2}_{t_{k-1}}
\frac{1}{\sqrt{s_1-t_{k-1}}}ds_1
\frac{1}{s_2-t_{k-1}}=
\frac{2}{\sqrt{s_2-t_{k-1}}}.
$$

So we get the following estimation
$$
2\int^1_{t_{k-1}}h(s_1)\int^1_{s_1}
\frac{h(s_2)}{s_2-t_{k-1}}ds_2ds_1\leq8\|h\|^2.
$$
It implies that
$$
\int_{\Delta_k}
\frac
{
\prod^{k-1}_{i=1}
\left(
\int^{t_{i+1}}_{t_i}h_1(s)ds
\right)^2
}
{
(t_{i+1}-t_i)^2
}
d\vec{t}=
$$
$$
=
\int_{\Delta_{k-1}}
\frac
{
\prod^{k-2}_{i=1}
\left(
\int^{t_{i+1}}_{t_i}h_1(s)ds
\right)^2
}
{
(t_{i+1}-t_i)^2
}
\cdot
\int^1_{t_{k-1}}
\frac
{
\left(
\int^{t_{k+1}}_{t_k}h_1(s)ds
\right)^2
}
{
(t_{k}-t_{k-1})^2
}
dt_k
d\vec{t}\leq
$$
\begin{equation}
\label{eq1.8}
\leq 8\|h_1\|^2
\int_{\Delta_{k-1}}
\frac
{
\prod^{k-2}_{i=1}
\left(
\int^{t_{i+1}}_{t_i}h_1(s)ds
\right)^2
}
{
(t_{i+1}-t_i)^2
}
d\vec{t}.
\end{equation}

By using the same arguments it can be checked that the expression
\eqref{eq1.8}  less or equal then
$(8\|h_1\|^2)^{k-1}.$

The theorem is proved.
\end{proof}

The main aim  of the article is to construct the regularization of
expression \eqref{eq1.4}  for general Gaussian process $x.$

Let us describe the properties of the Gaussian process which are
necessary for the application of the method of regularization
considered in the theorem \ref{thm1.1}.

{\bf2. Strong local nondeterminism property}. In this section we
introduce the condition under which we are able to prove the
existence of regularization for Fourier--Wiener transform of
self-intersection local time of Gaussian process. This property is a
little bit stronger then  local nondeterminism introduced by
S.Berman \cite{7}. As before, for $g\in C([0; 1], H)$ we define Gaussian process $x(t)=(g(t),
\xi)$   with the help of white noise $\xi$ in the Hilbert space $H.$
 In this
section we also suppose, that 

for any $0\leq t_1<t_2<\ldots <t_k\leq 1 $
$$
\Gamma_{t_1\ldots t_k}\ne0.
$$
\begin{defn}
\label{defn2.1}
The process $x$ is strongly locally nondeterministic if for any fixed $k$ and an arbitrary $M\subset\{1, \ldots, k-1\}$
\begin{equation}
\label{eq1}
\Gamma_{t_1\ldots t_k}\sim G(\Delta g(t_i), i\notin M)\prod_{i\in M}
\|\Delta g(t_i)\|^2,
\end{equation}
when $\max_{i\in M}t_{i+1}-t_{i}\to0.$
\end{defn}
 It is evident that the property of local nondeterminism follows from
\eqref{eq1}. But the condition of definition \ref{defn2.1} is more
restrictive. For example, the next lemma shows that strong local
nondeterminism is sufficient for a weak convergence to zero of
projections on the small increments of the process.
\begin{lem}
\label{lem1'}
Suppose, that process $x$ is strongly locally nondeterministic. Then
$$
\forall \ h\in H:  \ P_{t_1t_2}h\to0, \ t_2-t_1\to0.
$$
\end{lem}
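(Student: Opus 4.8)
The plan is to exploit the strong local nondeterminism property \eqref{eq1} in the case $k=2$, where it reads $\Gamma_{t_1t_2}\sim\|\Delta g(t_1)\|^2$ as $t_2-t_1\to0$; but more usefully for a general $h\in H$ one should look at the three-point Gram determinant built on the two increments $\Delta g(t_1)=g(t_2)-g(t_1)$ and on a fixed ``test'' increment carrying $h$. The cleanest route is to note that for a fixed $h\in H$,
$$
\|P_{t_1t_2}h\|^2=\frac{(\Delta g(t_1),h)^2}{\|\Delta g(t_1)\|^2},
$$
so the statement $P_{t_1t_2}h\to0$ is equivalent to $(\Delta g(t_1),h)^2=o(\|\Delta g(t_1)\|^2)$ as $t_2-t_1\to0$. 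Thus the task reduces to showing that the angle between the small increment $\Delta g(t_1)$ and the fixed vector $h$ cannot stay bounded away from orthogonality.

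First I would reduce to the case $h=g(r)$ (or, by density of the linear span of the values of $g$, to finite linear combinations of such values), since if $\|P_{t_1t_2}g(r)\|\to0$ for every $r$ and the bound is uniform enough, linearity gives it for all $h$ in the dense span, and then a standard $3\varepsilon$-argument using $\|P_{t_1t_2}\|\le1$ extends it to all $h\in H$. Next, for $h$ in the span I would write $h=\sum_j c_j g(r_j)$, split each $g(r_j)$ as $\Delta g(t_1)$ plus pieces $g(r_j)-g(t_2)$ or $g(t_1)-g(r_j')$ disjoint in time from the interval $[t_1,t_2]$, and apply the strong local nondeterminism hypothesis with $k=3$: enlarging the partition by inserting the points $r_j$, condition \eqref{eq1} with $M=\{$the index of the tiny interval$\}$ says that the full Gram determinant factors asymptotically as $G(\text{big increments})\cdot\|\Delta g(t_1)\|^2$, which is exactly the statement that, after projecting off the big increments, $\Delta g(t_1)$ contributes a full factor $\|\Delta g(t_1)\|^2$ — i.e. $\Delta g(t_1)$ becomes asymptotically orthogonal to the fixed increments, hence to $h$.

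Concretely: let $Q$ be the orthogonal projection onto the span of the fixed increments $g(r_j)-g(t_2)$, $g(t_1)-g(r_j')$ (those not involving the small interval). Then $(\Delta g(t_1),h)=(\Delta g(t_1),Qh)+(\Delta g(t_1),(I-Q)h)$, and the second term is controlled by $\|(I-Q)h\|\cdot\|\Delta g(t_1)\|$; choosing the $r_j$ close to $t_1,t_2$ makes $(I-Q)h$ small, while the first term is handled by showing $\|Q\Delta g(t_1)\|^2=o(\|\Delta g(t_1)\|^2)$, which is precisely the ratio of the $(k{=}3)$-Gram determinant to $G(\text{big})\cdot$ (the remaining factor), forced to vanish by \eqref{eq1}. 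Combining gives $(\Delta g(t_1),h)^2=o(\|\Delta g(t_1)\|^2)$, hence $\|P_{t_1t_2}h\|\to0$.

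The main obstacle I expect is not the algebra but making the ``$\sim$'' in Definition \ref{defn2.1} do honest work: \eqref{eq1} is an asymptotic equivalence of determinants, and I must turn it into a genuine estimate of the form $\|Q\Delta g(t_1)\|^2/\|\Delta g(t_1)\|^2\to0$, uniformly enough in the auxiliary points $r_j$ to let the $\varepsilon/2$–$\varepsilon/2$ splitting close. This requires either a quantitative version of strong local nondeterminism or a careful argument that the factor $G(\Delta g(t_i),i\notin M)$ appearing in \eqref{eq1} is, for the enlarged partition, exactly $G$ of the big increments times the Gram determinant of $\Delta g(t_1)$ projected orthogonally to them — so that the asymptotic identity literally says that projection has full length. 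Verifying that bookkeeping of which increments land in $M$ and which do not, and that the enlarged partition still satisfies the standing nondegeneracy assumption $\Gamma_{t_1\ldots t_k}\ne0$, is the delicate point.
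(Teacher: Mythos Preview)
Your plan is correct and is essentially the same argument the paper gives. Both proofs reduce by density of the linear span of $\{g(t):t\in[0,1]\}$ (together with $\|P_{t_1t_2}\|\le 1$) to the case $h=g(t)$ for a single fixed $t$, then enlarge the two-point configuration $t_1<t_2$ by inserting $t$ (and $0$) and use \eqref{eq1} for the enlarged partition to see that the small increment $\Delta g(t_1)$ is asymptotically orthogonal to each of the remaining increments, hence to $g(t)$.

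The paper's execution is leaner than your sketch. It treats one test time $t$ and splits into the three cases $t\le t_1$, $t_2\le t$, $t_1\le t\le t_2$; in each case it notes that \eqref{eq1} forces
\[
\frac{(\Delta g(t_j),\Delta g(t_i))}{\|\Delta g(t_j)\|\,\|\Delta g(t_i)\|}\to 0
\quad\text{whenever }t_{i+1}-t_i\to 0,
\]
from which $(g(t),\Delta g(t_1))/\|\Delta g(t_1)\|\to 0$ follows directly (after discarding one term of size $\|\Delta g(t_1)\|$ in the case $t_2\le t$). Your auxiliary projection $Q$, the idea of ``choosing the $r_j$ close to $t_1,t_2$'', and the $\varepsilon/2$ splitting are not needed: once you have reduced to a single $h=g(t)$ there is nothing left to approximate, and the uniformity concern you raise at the end does not arise, because the asymptotic in \eqref{eq1} is exactly the statement that the component of $\Delta g(t_1)$ orthogonal to the span of the other increments has full relative length, i.e.\ its projection onto that span is $o(\|\Delta g(t_1)\|)$.
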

\begin{proof}
It is enough to consider $P_{t_1t_2}g(t)$    for fixed $t.$   Suppose,
that $t\leq t_1<t_2.$  Apply the condition \eqref{eq1}  to the points
$0<t<t_1<t_2$    or $0<t=t_1<t_2.$  Note, that \eqref{eq1}    means that
the orthogonal component of $\Delta g(t_i)$  to the linear span of
$\{\Delta g(t_j); 1\leq j\leq k-1, j\ne i\}$  asymptotically coinside
with  $\Delta g(t_i).$  It means in particular, that for arbitrary $j\ne i$
$$
\frac{(\Delta g(t_j), \Delta g(t_i))}
{\|\Delta g(t_j)\|\|\Delta g(t_i)\|}\to0, t_{i+1}-t_i\to0.
$$
In our case we have
$$
\frac{( g(t), \Delta g(t_1))}
{\|\Delta g(t_1)\|}\to0, t_{2}-t_1\to0.
$$
Now suppose, that $0\leq t_1<t_2\leq t.$   Then it follows from
\eqref{eq1}  that
$$
\frac{ (g(t)-g(t_2)+g(t_1), \Delta g(t_1))}
{\|\Delta g(t_1)\|}\to0, t_{2}-t_1\to0.
$$
Since $\|\Delta g(t_1)\|\to0, t_{2}-t_1\to0, $  then again
$$
\frac{ (g(t), \Delta g(t_1))}
{\|\Delta g(t_1)\|}\to0, t_{2}-t_1\to0.
$$
The last case $t_1\leq t\leq t_2$   can be considered in the same way.
Lemma is proved.
\end{proof}

Let us recall that $x$  is locally nondeterministic on some open
interval $J$ \cite{7}  if and only if
$$
\lim_{c\to0+}\inf_{t_m-t_1\leq c}
G
\left(
\frac{x(t_1)}{({\Var x(t_1)})^{1/2}}, \ldots,
\frac{x(t_m)-x(t_{m-1})}{(\Var(x(t_m)-x(t_{m-1})))^{1/2}}
\right)>0,
$$
for $m\geq2$ and an arbitrary points which are ordered according to
their indices: $t_1<\ldots<t_m$  in $J.$   The next example shows
that there exist locally nondeterministic processes for which the
statement of lemma 2 does not hold.
\begin{expl}
\label{expl1i}
$$
x(t)=w(t)+\sqrt{t}\xi, \ t\in[0, 1],
$$
where $w$  is one dimensional Wiener process, $\xi$  is a standart
Gaussian random variable. Suppose that $w$   and $\xi$  are independent.
Let us check that $x$  is locally nondeterministic. To prove that let us notice
that for $0<t_1<\ldots<t_m\leq 1$
$$
\Var(x(t_i)-x(t_{i-1}))=t_i-t_{i-1}+(\sqrt{t_i}-\sqrt{t_{i-1}})^2, i=\ov{2,m},
$$
$$
\left(
\frac{x(t_i)-x(t_{i-1})}{({\Var (x(t_i)-x(t_{i-1}})))^{1/2}},
\frac{x(t_i)-x(t_{i-1})}{(\Var(x(t_i)-x(t_{i-1})))^{1/2}}
\right)=1
$$
and for $k\ne l$
$$
 \left( \frac{x(t_k)-x(t_{k-1})}{({\Var
(x(t_k)-x(t_{k-1}})))^{1/2}},
\frac{x(t_l)-x(t_{l-1})}{(\Var(x(t_l)-x(t_{l-1})))^{1/2}} \right)=
$$
$$
=
\frac
{(\sqrt{t_l}-\sqrt{t_{l-1}})(\sqrt{t_l}-\sqrt{t_{l-1}})}
{
\sqrt{t_k-t_{k-1}+(\sqrt{t_k}-\sqrt{t_{k-1}})^2}
\sqrt{t_l-t_{l-1}+(\sqrt{t_l}-\sqrt{t_{l-1}})^2}}=
$$
$$
=
\frac
{
(\sqrt{t_k}-\sqrt{t_{k-1}}
)
(\sqrt{t_l}-\sqrt{t_{l-1}}
)
}
{
\sqrt{
(\sqrt{t_k}-\sqrt{t_{k-1}})2\sqrt{t_k}
}
\sqrt{(\sqrt{t_l}-\sqrt{t_{l-1}})2\sqrt{t_l}}
}=
$$
$$
=\frac{1}{2}
\left(1-\frac{\sqrt{t_{k-1}}}{\sqrt{t_k}}\right)^{1/2}
\left(1-\frac{\sqrt{t_{l-1}}}{\sqrt{t_l}}\right)^{1/2}\to0,
t_m-t_1\to0.
$$
It implies that
$$
\lim_{c\to0+}\inf_{t_m-t_1\leq c}
G
\left(
\frac{x(t_1)}{({\Var x(t_1)})^{1/2}}, \ldots,
\frac{x(t_m)-x(t_{m-1})}{(\Var(x(t_m)-x(t_{m-1})))^{1/2}}
\right)=1>0.
$$
To check that the projection related to the increment of the process
$x$  on small time interval does not tend to zero, consider
$$
g(t_1)=\sqrt{t_1}e+\1_{[0, t_1]}.
$$
Then for $h=e\oplus 0$ we get
$$
\|P_{t_1}h\|^2=\frac{(\sqrt{t_1})^2}{t_1+t_1}=\frac{1}{2}\not\to0,  t_1\to0.
$$
\end{expl}
This example shows, that the strong local nondeterminism property
is more restrictive then local nondeterminism.

 Main example of the
process with the strong local nondeterminism property in this
article is the process of the kind
\begin{equation}
\label{eq2.2}
x(t)=((I+S)g^0(t), \xi),
\end{equation}
where $I$ is identity operator and $S$  is compact operator in
$L_2([0; 1]),$ such that $\|S\|<1,$$ g^0(t)=\1_{[0; t]}.$
\begin{expl}
\label{expl2.1} Consider the process
$$
x(t)=w(t)+u(t),
$$
where $w$ is a Wiener process in $\mbR$ such that $w(t)=(\1_{[0; t]},\xi),$ $\xi$ is a white noise in $L_2([0, \frac{\pi}{2}])$ and $u$ is a solution of the
following Sturm--Liouville problem\cite{8}
\begin{equation}
\label{eq2.3'}
\begin{cases}
u''+u=\xi\\
u(0)=0\\
u(\frac{\pi}{2})=0,
\end{cases}
\end{equation}
 The solution of
\eqref{eq2.3'} is given by the formula
$$
u(t)=( g(t, \cdot), \xi),
$$
where $g$ is Green's function.

It is not difficult to check that
$$
g(t, s)=-\cos t\sin s\1_{\{s<t\}}-\sin t\cos s\1_{\{s>t\}}.
$$
Since we want $g$  to describe the law of action operator $S$  on
$g^0(t),$  then
\begin{equation}
\label{eq2.2}
(Sg^0t)(u)=S\1_{[0, t]}(u)=-\cos t\sin u\1_{\{u<t\}}-
\sin t\cos u\1_{\{u>t\}}.
\end{equation}
By using \eqref{eq2.2}  we get
\begin{equation}
\label{eq2.3}
(Sf)(s)=\int^{\frac{\pi}{2}}_0
f(u)[\1_{[0, s]}(u)\sin u\sin s-\1_{[s,\frac{\pi}{2} ]}(u)
\cos u\cos s]du.
\end{equation}
It follows from \eqref{eq2.3} that $S$ is compact operator.
\end{expl}
The following lemma describes one of the properties of the process
$x.$

\begin{lem}
\label{lem4'}
$x$  is strongly locally nondeterministic.
\end{lem}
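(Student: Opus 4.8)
The plan is to exploit the special structure $g=(I+S)g^0$ with $g^0(t)=\1_{[0;t]}$. Then each increment is $\Delta g(t_i)=(I+S)e_i$ with $e_i:=\1_{[t_i;t_{i+1}]}$, $i=1,\dots,k-1$, and these indicators have pairwise disjoint supports, so $(e_l,e_j)=\delta_{lj}(t_{i+1}-t_i)$ and the normalized vectors $\hat e_i:=e_i/\|e_i\|$ are orthonormal. Put $\Phi=\Phi(t_1,\dots,t_k):=\bigl(((I+S)\hat e_l,(I+S)\hat e_j)\bigr)_{l,j=1}^{k-1}$. Pulling the scalar $\|e_i\|$ out of the $i$-th row and column of each Gram matrix,
$$
\Gamma_{t_1\ldots t_k}=\Bigl(\prod_{i=1}^{k-1}(t_{i+1}-t_i)\Bigr)\det\Phi ,
$$
and, applying the same identity to the sub-family indexed by $M^c:=\{1,\dots,k-1\}\setminus M$,
$$
G(\Delta g(t_i),\,i\notin M)\prod_{i\in M}\|\Delta g(t_i)\|^2=\Bigl(\prod_{i=1}^{k-1}(t_{i+1}-t_i)\Bigr)\bigl(\det\Phi_{M^c}\bigr)\prod_{i\in M}\Phi_{ii},
$$
where $\Phi_{M^c}$ denotes the principal submatrix of $\Phi$ with rows and columns in $M^c$. (Both right-hand sides are nonzero: $\Gamma\ne0$ is assumed, and $\det\Phi_{M^c}\ne0$ because $I+S$ is invertible for $\|S\|<1$ and the $e_i$, $i\in M^c$, are orthogonal and nonzero.) Cancelling the common factor, the strong local nondeterminism of $x$, i.e. \eqref{eq1}, is equivalent to the matrix statement $\det\Phi/\bigl((\det\Phi_{M^c})\prod_{i\in M}\Phi_{ii}\bigr)\to1$ as $\max_{i\in M}(t_{i+1}-t_i)\to0$.

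The analytic input is that compactness of $S$ kills the mixed entries of $\Phi$. Since
$$
\Phi_{lj}=\delta_{lj}+(S\hat e_l,\hat e_j)+(\hat e_l,S\hat e_j)+(S\hat e_l,S\hat e_j),
$$
it suffices to prove that $\|S\hat e_i\|\to0$ and $\|S^{*}\hat e_i\|\to0$, uniformly in the location of the interval $I_i:=[t_i;t_{i+1}]$, as $|I_i|:=t_{i+1}-t_i\to0$. For fixed $f\in L_2$, $|(f,\hat e_i)|=|I_i|^{-1/2}\bigl|\int_{I_i}f\bigr|\le\|f\|_{L_2(I_i)}\to0$, so each such $\hat e_i$ tends weakly to $0$; if $\sup_{|I|\le\delta}\|S\1_I\|/\sqrt{|I|}$ did not vanish as $\delta\to0$, then compactness of $S$ would give a subsequence along which $S\hat e_i$ converges strongly to a nonzero vector while converging weakly to $0$ — a contradiction — and likewise for $S^{*}$. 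Hence $\Phi_{lj}-\delta_{lj}=o(1)$ whenever $l\in M$ or $j\in M$; in particular $\Phi_{ii}\to1$ for $i\in M$. So, in block form with respect to the splitting $\{1,\dots,k-1\}=M\sqcup M^c$,
$$
\Phi=\begin{pmatrix} P & Q\\ Q^{*} & R\end{pmatrix},\qquad P=I_{|M|}+o(1),\quad Q=o(1),\quad R=\Phi_{M^c}.
$$

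To conclude I would use two bounds on $R$ that hold uniformly thanks to $\|S\|<1$: for any vector $c=(c_i)_{i\in M^c}$, the quadratic form $\sum_{l,j\in M^c}\bar c_l c_j R_{lj}$ equals $\bigl\|(I+S)\sum_{i\in M^c}c_i\hat e_i\bigr\|^2$, so
$$
(1-\|S\|)^2\sum_{i\in M^c}|c_i|^2\le\sum_{l,j\in M^c}\bar c_l c_j R_{lj}\le(1+\|S\|)^2\sum_{i\in M^c}|c_i|^2,
$$
whence $R$ is invertible with $\|R^{-1}\|\le(1-\|S\|)^{-2}$ and $\|R\|\le(1+\|S\|)^2$, uniformly in $\vec t$. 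The Schur complement formula then gives $\det\Phi=(\det R)\det\bigl(P-QR^{-1}Q^{*}\bigr)$, and since $P=I+o(1)$, $Q=o(1)$ and $\|R^{-1}\|$ is bounded, $P-QR^{-1}Q^{*}=I_{|M|}+o(1)$, so $\det\Phi/\det R\to1$. Combined with $\prod_{i\in M}\Phi_{ii}\to1$ this is exactly \eqref{eq1}. The only step beyond bookkeeping is the uniform estimate $\sup_{|I|\le\delta}\|S\1_I\|/\sqrt{|I|}\to0$ (and the same for $S^{*}$); once that is in hand, everything is linear algebra with constants depending only on $k$ and on $\|S\|<1$.
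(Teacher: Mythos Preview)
Your proof is correct and follows essentially the same route as the paper: normalize the increments, use compactness of $S$ (together with the weak convergence $\hat e_i\rightharpoonup0$) to show that the Gram-matrix entries involving indices in $M$ become trivial, and use invertibility of $I+S$ to keep the $M^c$-block uniformly nondegenerate. The only cosmetic differences are that you normalize by $\|e_i\|=\sqrt{t_{i+1}-t_i}$ rather than by $\|(I+S)e_i\|$, and you finish with the Schur complement identity $\det\Phi=(\det R)\det(P-QR^{-1}Q^{*})$ instead of the paper's cofactor expansion along the rows in $M$; both organize the same computation.
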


\begin{proof}
To prove the lemma let us check that
$$
\lim_{\begin{subarray}{l}
{\max i\in M}\\
t_{i+1}-t_{i}\to0
\end{subarray}}
\frac{\Gamma_{t_1\ldots t_k}}
{G(\Delta g(t_i), i\notin M)\prod_{i\in M}\|\Delta g(t_i)\|^2}=1.
$$
For an arbitrary $q\in L_2([0;1]),$ denote by
$\wt{q}=\frac{q}{\|q\|}.$

Gram determinant properties imply that
$$
\frac{\Gamma_{t_1\ldots t_k}}
{G(\Delta g(t_i), i\notin M)\prod_{i\in M}\|\Delta g(t_i)\|^2}=
$$
$$
= \frac{G\Bigg( \wt{\Delta g}(t_1),\ldots, \wt{\Delta g}(t_{k-1})
\Bigg)} { G(\wt{\Delta g}(t_i), i\notin M)}.
$$
Check that for $k\in M, l=\ov{1, k-1}, k\ne l$
$$
\Bigg( \wt{\Delta g}(t_k), \wt{\Delta g}(t_{l}) \Bigg)\to0,
$$
when $\max_{i\in M}t_{i+1}-t_{i}\to0.$

Notice that for any $h\in L_2([0, 1])$  and $\ve>0$   there exists
$\delta>0$  such that for any $i\in M: \ t_{i+1}-t_{i}<\delta.$
\begin{equation}
\label{eq1vst2} \Bigg| (h, \wt{\Delta g}^0(t_{i-1}))  \Bigg|<\ve.
\end{equation}
It implies that $\|S \Delta \wt{g^0}(t_{i-1})\|\to0,$   when
$\max_{i\in M}t_{i+1}-t_{i}\to0$  since $S$  is a compact operator.

By using \eqref{eq1vst2} we get
$$
\frac { |((I+S)\Delta {g}^0(t_{k-1}), (I+S)\Delta {g}^0(t_{l-1}))| }
{ \| (I+S)\Delta {g}^0(t_{k-1}) \| \| (I+S)\Delta {g}^0(t_{l-1}) \|
}
$$
$$
= \frac{ (S\Delta\wt{g}^0(t_{k-1}),\Delta\wt{g}^0(t_{l-1}))+
(\Delta\wt{g}^0(t_{k-1}), S\Delta\wt{g}^0(t_{l-1}))+
}{\phantom{ppppppp} }
$$
$$
\frac{+(S\wt{\Delta g}^0(t_{k-1}), S\Delta\wt{g}^0(t_{l-1}))} {
\sqrt{ 1+2(S\Delta\wt{g}^0(t_{k-1}),\Delta\wt{g}^0(t_{k-1}))+
(S\Delta\wt{g}^0(t_{k-1}), S\Delta\wt{g}^0(t_{k-1})) } }
$$
\begin{equation}
\label{2vst2}
\frac{\phantom{ppppp} }
{\sqrt{
1+2(S\Delta\wt{g}^0(t_{l-1}),\Delta\wt{g}^0(t_{l-1}))+
(S\Delta\wt{g}^0(t_{l-1}), S\Delta\wt{g}^0(t_{l-1}))
}
}\to0
\end{equation}
 when  $\max_{i\in M}t_{i+1}-t_{i}\to0.$

Since a value of the determinant does not change under an even number of
transpositions of rows and columns we suppose that $M=\{1,\ldots, l\}.$
Then
$$
\frac {G \Bigg( \wt{\Delta g}(t_1),\ldots, \wt{\Delta g}(t_{k-1})
\Bigg) } { G\Bigg(\wt{\Delta g}(t_i), i\notin M\Bigg) } =
$$
$$
=\frac { G \Bigg(\wt{\Delta g}(t_i), i\notin M\Bigg) + F
\Bigg(\wt{\Delta g}(t_i),M_{1j}, m^n_{ij}, i,j=\ov{1,k-1}, n=\ov{1,
l-1}\Bigg) } { G\Bigg(\wt{\Delta g}(t_i), i\notin M\Bigg) },
$$
where
$$
F \Bigg(\wt{\Delta g}(t_i),M_{1j}, m^n_{ij}, i,j=\ov{1,k-1},
n=\ov{1, l-1}\Bigg)=
$$
$$
= \sum^{k-1}_{j=2}(-1)^{1+j} \Bigg( \wt{\Delta g}(t_1),\wt{\Delta g}(t_{j}) \Bigg)M_{1j}+
$$
$$
+ \sum^{k-1}_{j=3}(-1)^{1+j} \Bigg( \wt{\Delta g}(t_2),\wt{\Delta g}(t_{j}) \Bigg)m^1_{2j}+\ldots+
$$
$$
+ \sum^{k-1}_{j=l+1}(-1)^{l+j} \Bigg( \wt{\Delta g}(t_l),\wt{\Delta g}(t_{j}) \Bigg)m^{l-1}_{lj}.
$$
Here $M_{1j}$  the minor of the matrix $\Bigg(\Bigg( \Delta\wt{
g}(t_i), \wt{\Delta g}(t_{j})
 \Bigg)\Bigg)^{k-1}_{ij=1}.$

$m^k_{ij}$   the minor of the same matrix after a deleting of $k$  rows and
$k$  columns.

Since $\|S\|<1$   the $(I+S)$  has a continuous inverse operator.

This and compactness of $S$ imply that $\inf_{\vec{t}} G(\wt{\Delta g}(t_i), i\notin M)>0.$  Consequently,
$$
\frac {G(\wt{\Delta g}(t_i), i\notin M)+ F \Bigg(\wt{\Delta g}(t_i),M_{1j}, m^n_{ij}, i,j=\ov{1,k-1}, n=\ov{1, l-1}\Bigg) }
{ G(\wt{\Delta g}(t_i), i\notin M) } \to1,
$$
when $\max_{i\in M}: t_{i+1}-t_{i}\to0. $

Lemma is proved.
\end{proof}

The strong local nondeterminism property can be reformulated in
terms of conditional variance.
\begin{defn}
\label{defn1'} Gaussian process $x$   has strong local nondeterminism
property if and only if $t_1<t_2<\ldots <t_k$
$$
\frac{\Var(\Delta x(t_i)/\Delta x(t_j), 1\leq j\leq k-1, j\ne i)}
{\Var(\Delta x(t_i))}\to1, t_{i+1}-t_i\to0.
$$

The strong local nondeterminism property can be used to describe asymptotic
behavior of $\Gamma_{t_1\ldots t_k}$ when some of differences
$t_{i+1}-t_i$ converge to zero. Note that this convergence holds for every $i=1,\ldots,k-1.$  In contrast to Berman  definition\cite{7}, where $i=k-1.$
\end{defn}

{\bf3. Regularization for Fourier--Wiener transform of
self-intersection local time}.

As it was shown in the previous section the
Fourier--Wiener transform of the formal expression for the self-intersection local
time contains the function $\Gamma^{-1}_{t_1\ldots t_k}$  which has
singularities along the diagonals. Here we present the way of
regularization of the integral with
 $\Gamma^{-1}_{t_1\ldots t_k}$ for the processes which are the compact
perturbations of the Wiener process. Let us suppose in this section,
that the Gaussian process $x$  has the following form
$$
x(t)=((g(t), \xi_1), (g(t), \xi_2))
$$
with the independent Gaussian white noises $\xi_1, \xi_2$   in
$L_2([0; 1])$   and
$$
g(t)=g^0(t)+Sg^0(t),
$$
where $g^0(t)=\1_{[0; t]},$ $S$ is a compact operator in $L_2([0; 1])$  with
$\|S\|<1.$  For $0\leq t_1<\ldots<t_k\leq1$  denote by
$\wt{\Delta g}(t_1),\ldots,\wt{\Delta g}(t_{k-1})$
the orthonormal
system which is obtained from
${\Delta g}(t_1),\ldots,{\Delta g}(t_{k-1})$
via the orthogonalization procedure. Since the elements
${\Delta g}(t_1),\ldots,{\Delta g}(t_{k-1})$
are linearly independent (see section 2)  all the elements
$\wt{\Delta g}(t_1),\ldots,\wt{\Delta g}(t_{k-1})$
are non-zero. For
$M\subset\{1, \ldots, k-1\}$
denote by $P_M$  the projection on
$\wt{\Delta g}(t_i), i\in M.$

\begin{thm}
\label{thm3.1'} The following integral converges for arbitrary
$h\in L_2([0; 1])$
$$
\int_{\Delta_k}\Gamma^{-1}_{t_1\ldots t_k}
(\sum_{M\subset\{1,\ldots, k-1\}}
(-1)^{|M|}e^{-\frac{1}{2}\|P_Mh\|^2}
)d\vec{t}.
$$
\end{thm}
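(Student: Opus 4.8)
The plan is to reduce the statement to the one-dimensional (Schur-test) estimate used in the proof of Theorem~\ref{thm1.1}, exploiting that $I+S$ is boundedly invertible because $\|S\|<1$.

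First I would simplify the numerator. Since $\wt{\Delta g}(t_1),\ldots,\wt{\Delta g}(t_{k-1})$ is an orthonormal system we have $\|P_Mh\|^2=\sum_{i\in M}(h,\wt{\Delta g}(t_i))^2$, so writing $c_i=(h,\wt{\Delta g}(t_i))$,
$$
\sum_{M\subset\{1,\ldots,k-1\}}(-1)^{|M|}e^{-\frac12\|P_Mh\|^2}=\prod_{i=1}^{k-1}\Bigl(1-e^{-\frac12c_i^2}\Bigr).
$$
Using $0\le1-e^{-x}\le x$, the absolute value of the integrand is at most $2^{-(k-1)}\Gamma^{-1}_{t_1\ldots t_k}\prod_{i=1}^{k-1}c_i^2$, so it suffices to show
$$
\int_{\Delta_k}\Gamma^{-1}_{t_1\ldots t_k}\prod_{i=1}^{k-1}(h,\wt{\Delta g}(t_i))^2\,d\vec t<\infty .
$$

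Next I would turn this into a Wiener-type integral. Let $\Pi_{i-1}$ be the orthogonal projection onto $\mathrm{span}\{\Delta g(t_1),\ldots,\Delta g(t_{i-1})\}$ and $v_i=\Delta g(t_i)-\Pi_{i-1}\Delta g(t_i)$; then by Gram determinant properties $\Gamma_{t_1\ldots t_k}=\prod_{i=1}^{k-1}\|v_i\|^2$ and $\wt{\Delta g}(t_i)=v_i/\|v_i\|$, so $\Gamma^{-1}_{t_1\ldots t_k}\prod_i(h,\wt{\Delta g}(t_i))^2=\prod_i(h,v_i)^2\|v_i\|^{-4}$. Because $\Delta g(t_j)=(I+S)\Delta g^0(t_j)$ with $\Delta g^0(t_j)=\1_{(t_j,t_{j+1}]}$ and these indicators are pairwise orthogonal,
$$
\|v_i\|=\mathrm{dist}\bigl((I+S)\Delta g^0(t_i),\,(I+S)\,\mathrm{span}\{\Delta g^0(t_j):j<i\}\bigr)\ge\|(I+S)^{-1}\|^{-1}\sqrt{t_{i+1}-t_i},
$$
while $(h,v_i)=(h-\Pi_{i-1}h,(I+S)\Delta g^0(t_i))=\int_{t_i}^{t_{i+1}}\phi_i(s)\,ds$ with $\phi_i=(I+S)^{*}(h-\Pi_{i-1}h)$ and $\|\phi_i\|\le\|I+S\|\,\|h\|$. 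Consequently
$$
\Gamma^{-1}_{t_1\ldots t_k}\prod_{i=1}^{k-1}(h,\wt{\Delta g}(t_i))^2\le\|(I+S)^{-1}\|^{4(k-1)}\prod_{i=1}^{k-1}\frac{\bigl(\int_{t_i}^{t_{i+1}}\phi_i(s)\,ds\bigr)^2}{(t_{i+1}-t_i)^2}.
$$

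Finally I would integrate iteratively, as in the proof of Theorem~\ref{thm1.1}. The key observation is that $\phi_i=(I+S)^{*}(h-\Pi_{i-1}h)$ depends on $t_1,\ldots,t_i$ only, hence not on $t_{i+1}$. Integrating first in $t_k$ affects only the factor $i=k-1$, and (after replacing $\phi_{k-1}$ by $|\phi_{k-1}|$) the Schur-test inequality of Theorem~\ref{thm1.1} gives $\int_{t_{k-1}}^{1}(t_k-t_{k-1})^{-2}\bigl(\int_{t_{k-1}}^{t_k}|\phi_{k-1}(s)|\,ds\bigr)^2dt_k\le8\|\phi_{k-1}\|^2\le8\|I+S\|^2\|h\|^2$; repeating this for $t_k,t_{k-1},\ldots,t_2$ (at the step that removes $t_{i+1}$ only the factor $i$ survives, and $\phi_i$ is independent of $t_{i+1}$) and then integrating in $t_1\in[0,1]$ bounds the integral from the second paragraph by $\|(I+S)^{-1}\|^{4(k-1)}\bigl(8\|I+S\|^2\|h\|^2\bigr)^{k-1}<\infty$. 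The main obstacle is precisely this bookkeeping: verifying that at each stage the function entering the Schur test is independent of the variable of integration (so that the one-dimensional estimate of Theorem~\ref{thm1.1} applies verbatim with $\phi_i$ in place of $h_1$), together with the uniform bound $\|v_i\|^2\ge\|(I+S)^{-1}\|^{-2}(t_{i+1}-t_i)$ — both resting on the invertibility of $I+S$ and the orthogonality of the increments of $g^0$.
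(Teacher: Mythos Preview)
Your argument is correct and follows the same route as the paper: factor $\sum_M(-1)^{|M|}e^{-\|P_Mh\|^2/2}$ into $\prod_j\bigl(1-e^{-c_j^2/2}\bigr)$, bound by $2^{-(k-1)}\prod_jc_j^2$, and feed the result into the Schur-test estimate of Theorem~\ref{thm1.1} iteratively in $t_k,t_{k-1},\ldots,t_2$. The one difference is in how the non-Wiener integrand is compared with the Wiener one: the paper invokes strong local nondeterminism (Lemma~\ref{lem4'}) to argue that $(h,\wt{\Delta g}(t_{k-1}))^2/(t_k-t_{k-1})$ may be replaced by $(h,\Delta g(t_{k-1}))^2/(t_k-t_{k-1})^2=((I+S)^*h,\Delta g^0(t_{k-1}))^2/(t_k-t_{k-1})^2$, whereas you bypass this via the direct algebraic bounds $\|v_i\|\ge\|(I+S)^{-1}\|^{-1}\sqrt{t_{i+1}-t_i}$ and $(h,v_i)=\int_{t_i}^{t_{i+1}}\phi_i$ with $\|\phi_i\|\le\|I+S\|\,\|h\|$; your reduction is more explicit and avoids the asymptotic-versus-uniform issue implicit in the paper's step, at no extra cost.
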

\begin{proof}
It is enough to check the convergence of the integral
$$
\int_{\Delta_k}|\prod^{k-1}_{j=1}(t_{j+1}-t_j)^{-1}
(\sum_{M\subset\{1,\ldots, k-1\}}
(-1)^{|M|}e^{-\frac{1}{2}\|P_Mh\|^2}
)|d\vec{t}=
$$
$$
=
\int_{\Delta_k}\prod^{k-1}_{j=1}
\frac{1-e^{-\frac{1}{2}(h, \wt{\Delta g}(t_j))^2}}{t_{j+1}-t_j}d\vec{t}\leq
$$
$$
\leq
\int_{\Delta_k}
\frac{1}{2^{k-1}}
\prod^{k-1}_{j=1}
\frac{(h, \wt{\Delta g}(t_j))^2}{t_{j+1}-t_j}d\vec{t}.
$$
Let us consider
$$
\int^1_{t_{k-1}}
\frac{(h, \wt{\Delta g}(t_{k-1}))^2}{t_{k}-t_{k-1}}dt_k.
$$
Denote by $f(t_k)$  the difference
$$
f(t_k)=\Delta g(t_{k-1})-P_{t_1\ldots t_{k-1}}\Delta g(t_{k-1}).
$$
As it was proved before the process $x$ is strongly locally nondeterministic.
Hence, uniformly with respect to $t_1,\ldots, t_{k-1}$  the
following relations hold
$$
\|
P_{t_1\ldots t_{k-1}}
\frac{\Delta g(t_{k-1})}{\sqrt{t_k-t_{k-1}}}
\|\to0, \ t_k\to t_{k-1},
$$
$$
\frac{\|f(t_k)\|^2}{t_k-t_{k-1}}\to1, \ t_k\to t_{k-1}.
$$
Consequently, it is enough to consider integral
$$
\int^1_{t_{k-1}}
\frac{(h, {\Delta g}(t_{k-1}))^2}{(t_{k}-t_{k-1})^2}dt_k=
$$
$$
=
\int^1_{t_{k-1}}
\frac{(h+S^*h, {\Delta g^0}(t_{k-1}))^2}{(t_{k}-t_{k-1})^2}dt_k.
$$
It can be shown that the last integral can be estimated above by  $C\|h\|^2$  for some absolute
constant $C$  as it was done in Section 1. Consequently, the initial
integral absolutely converges. The theorem is proved.
\end{proof}

As a corollary one can obtain the regularization for the formal expression
of the self-intersection local  time for the process $x. $  Define for
$0\leq t_1<\ldots<t_k\leq1$  the random vectors
$\ov{\Delta x}(t_1),\ldots, \ov{\Delta x}(t_{k-1})$
as follows
$$
\ov{\Delta x}(t_{1})={\Delta x}(t_{1}),
$$
$$
\ov{\Delta x}(t_{j})={\Delta x}(t_{j})-E({\Delta x}(t_{j})/
{\Delta x}(t_{1}), \ldots, {\Delta x}(t_{j-1})), \ j=2, \ldots, k-1.
$$
The following statement holds.
\begin{thm}
\label{thm3.2'}
The following integral from the generalized Gaussian functional has a well-defined Fourier--Wiener
transform
$$
\int_{\Delta_k} \sum_{M\subset\{1,\ldots, k-1\}} (-1)^{|M|+(k-1)}
\prod_{j\in M} \delta_0(\ov{\Delta x}(t_j)) \prod^{k-1}_{j=1}
\frac{1}{t_{j+1}-t_j} (E\prod_{j\in M}\delta_0(\ov{\Delta
x}(t_j)))^{-1}
 d\vec{t}.
$$
\end{thm}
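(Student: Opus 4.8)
The plan is to transform the integrand termwise, to recognise the result as the two--parameter analogue of the integrand of Theorem \ref{thm3.1'}, and to conclude by the estimates already obtained there and in Theorem \ref{thm1.1}. First I would note that, since $x(t)=((g(t),\xi_1),(g(t),\xi_2))$ and the conditional expectation of jointly Gaussian variables is the orthogonal projection, the orthogonalised random vectors are
$$
\ov{\Delta x}(t_j)=((\hat g_j,\xi_1),(\hat g_j,\xi_2)),\qquad \hat g_j=\Delta g(t_j)-P_{t_1\ldots t_j}\Delta g(t_j),
$$
where $\hat g_1,\ldots,\hat g_{k-1}$ are the non-normalised Gram--Schmidt vectors of $\Delta g(t_1),\ldots,\Delta g(t_{k-1})$; in particular $\{\hat g_j:j\in M\}$ is an orthogonal system of nonzero vectors, so $\prod_{j\in M}\delta_0(\ov{\Delta x}(t_j))$ is the generalized Gaussian functional attached to a non-degenerate $\mbR^{2|M|}$--valued Gaussian vector, its Fourier--Wiener transform exists as the $\ve\to0+$ limit of the $f_\ve$--regularisations as in \eqref{eq1.3}, and $\hat g_j/\|\hat g_j\|=\wt{\Delta g}(t_j)$. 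The computation leading to \eqref{eq1.4}, carried out for the family $(\hat g_j)_{j\in M}$ in place of the consecutive increments, gives via Lemma \ref{lem1.1}
$$
\cT\Bigl(\prod_{j\in M}\delta_0(\ov{\Delta x}(t_j))\Bigr)(h_1,h_2)=\Bigl(E\prod_{j\in M}\delta_0(\ov{\Delta x}(t_j))\Bigr)e^{-\frac{1}{2}(\|P_Mh_1\|^2+\|P_Mh_2\|^2)},
$$
the prefactor being identified by setting $h_1=h_2=0$; hence the product of $\cT(\prod_{j\in M}\delta_0(\ov{\Delta x}(t_j)))(h_1,h_2)$ with $(E\prod_{j\in M}\delta_0(\ov{\Delta x}(t_j)))^{-1}$ equals $e^{-\frac{1}{2}(\|P_Mh_1\|^2+\|P_Mh_2\|^2)}$ regardless of the normalisation of $\delta_0$.

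Next, by linearity of $\cT$ and Fubini's theorem --- legitimate once the absolute convergence below is available, and, at the level of the approximations $f_\ve$, uniformly in $\ve$ --- the Fourier--Wiener transform of the integral in the statement equals
$$
(-1)^{k-1}\int_{\Delta_k}\prod_{j=1}^{k-1}\frac{1}{t_{j+1}-t_j}\sum_{M\subset\{1,\ldots,k-1\}}(-1)^{|M|}e^{-\frac{1}{2}(\|P_Mh_1\|^2+\|P_Mh_2\|^2)}\,d\vec t.
$$
Since $\wt{\Delta g}(t_1),\ldots,\wt{\Delta g}(t_{k-1})$ is orthonormal, $\|P_Mh_i\|^2=\sum_{j\in M}(\wt{\Delta g}(t_j),h_i)^2$, so the inner sum factorises:
$$
\sum_{M}(-1)^{|M|}e^{-\frac{1}{2}(\|P_Mh_1\|^2+\|P_Mh_2\|^2)}=\prod_{j=1}^{k-1}\Bigl(1-e^{-\frac{1}{2}((\wt{\Delta g}(t_j),h_1)^2+(\wt{\Delta g}(t_j),h_2)^2)}\Bigr).
$$
This is exactly the two--component analogue of the expression whose integrability is proved in Theorem \ref{thm3.1'}, with $\Gamma^{-1}_{t_1\ldots t_k}$ replaced by the factor $\prod_j(t_{j+1}-t_j)^{-1}$ already present in the integrand.

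It then remains to show that this integral converges absolutely. Using $0\le 1-e^{-a/2}\le a/2$ and expanding the product, one reduces to bounding finitely many integrals $\int_{\Delta_k}\prod_{j=1}^{k-1}(\wt{\Delta g}(t_j),h^{(j)})^2(t_{j+1}-t_j)^{-1}\,d\vec t$ with $h^{(j)}\in\{h_1,h_2\}$. By Lemma \ref{lem4'} the process $x$ is strongly locally nondeterministic, so, exactly as in the proof of Theorem \ref{thm3.1'}, uniformly in $t_1,\ldots,t_{k-1}$ one has $\|\hat g_{k-1}\|^2/(t_k-t_{k-1})\to1$ and $\|P_{t_1\ldots t_{k-1}}\Delta g(t_{k-1})\|^2/(t_k-t_{k-1})\to0$ as $t_k\downarrow t_{k-1}$. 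This reduces the inner $dt_k$--integral to
$$
\int_{t_{k-1}}^{1}\frac{\bigl(\int_{t_{k-1}}^{t_k}((I+S^*)h)(s)\,ds\bigr)^2}{(t_k-t_{k-1})^2}\,dt_k,
$$
which by the Schur--test estimate of Theorem \ref{thm1.1} does not exceed $C\|(I+S^*)h\|^2\le C(1+\|S\|)^2\|h\|^2$, uniformly in $t_1,\ldots,t_{k-1}$. The residual integrand over $\Delta_{k-1}$ has the same shape --- the Gram--Schmidt system of $\Delta g(t_1),\ldots,\Delta g(t_{k-1})$ truncated to its first $k-2$ vectors --- so the variables are removed one at a time and one arrives at a bound of order $(C(1+\|S\|)^2\|h\|^2)^{k-1}<\infty$. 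Consequently the integral in the statement has a well-defined Fourier--Wiener transform, namely the expression displayed above.

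The main obstacle is this last reduction: controlling the $dt_k$--integral uniformly on the part of $\Delta_k$ staying away from the diagonal, and showing that the error made in replacing $\wt{\Delta g}(t_{k-1})$ by the normalised increment is integrable near the diagonal. This is, however, precisely what is carried out in the proofs of Theorems \ref{thm1.1} and \ref{thm3.1'}, and can be invoked without change. A secondary, routine point is justifying the interchange of $\cT$ with $\int_{\Delta_k}$ and with $\lim_{\ve\to0+}$, which follows by applying the above bound to the $f_\ve$--regularisations, where each variance $\|\hat g_j\|^2$ is replaced by $\|\hat g_j\|^2+\ve$ and the estimates only improve.
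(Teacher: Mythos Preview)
Your proof is correct and follows the paper's approach: the paper's own proof is the single sentence ``The proof of this theorem is a straightforward application of Theorem \ref{thm3.1'},'' and you have simply written out what that application consists of --- computing the termwise Fourier--Wiener transform, recognising the resulting integrand as the two--variable version of the one whose absolute integrability is established (with $\prod_j(t_{j+1}-t_j)^{-1}$ already replacing $\Gamma^{-1}_{t_1\ldots t_k}$) in the proof of Theorem~\ref{thm3.1'}, and then citing that estimate. The additional care you take with the mixed $h_1,h_2$ products and with the $f_\ve$--level justification of the interchange is more detail than the paper supplies, but entirely in the same spirit.
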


The proof of this theorem is a straightforward application of Theorem
\ref{thm3.1'}.

{\it Remark}. Note, that for Wiener process $\ov{\Delta x}(t_j)$
coincide with $\Delta x(t_j), j=1, \ldots, k-1$   and we obtain a regularization
described in the Section 1.


\begin{thebibliography}{99}
\bibitem{1}
I.M.Gelfand, G.E.Shilov, {\it
Generalized functions and actions on them
}, Fizmatlit, Moscow, 1959, 471 p.
\bibitem{2}
E.B.Dynkin, {\it Regularized self-intersection local times of planar
Brownian motion}, The Ann. of Probab. (1988)(16), no.1, 58-74.
\bibitem{3}
J. Rosen, {\it A renormalized local time for multiple intersection of
planar Brownian motion}, Seminaire de Probabilities XX, 20 (1986), 515-531.
\bibitem{4}
A.Dvoretzky, P.Erd\"os, S.Kakutani, {\it Multiple points of paths of
Brownian motion in the plane}, Bulletin of the research counsil of Israel
(1954), 364-371.
\bibitem{5}
L.C.G.Rogers, {\it Multiple points of Markov processes in a complete
metric space}, Seminaire de probabilities 23 (1989), 186-197.
\bibitem{6}
N.-R. Shieh, {\it Multiple points of a random field}, Procudings of the
American mathematical society 92 (1984), no.2, p.279-282.
\bibitem{7}
S.M.Berman, {\it Local Nondeterminism and local times of Gaussian processes},
Indiana Univ. Math.J. 23 (1973), no. 1, p.69-94.
\bibitem{8}
Yu. A. Rozanov, {\it Random Fields and Stochastic Partial Differential Equations }, Springer, 1998, 240 p.
\bibitem{9}S.M.Berman, {\it Self-intersections and local nondeterminism of Gaussian
processes}, The Annals of Probability 19 (1991), no. 1, p.160-191.
\bibitem{9}
Hui--Hsiung Kuo, {\it Fourier--Wiener transform on brownian functionals},
Lectures Notes in mathematics 828/1980 (1980), p. 146--161.
\bibitem{10}
P.Halmosh, V.Sunder, {\it Bounded integral operators on $L_2$ spaces},
Springer-Verlag (1978), p.160.
\bibitem{11}
Svante Janson, {\it Gaussian Hilbert spaces}, Cambridge tracts in
mathematics. -- Cambridge Univ. Press. -- 1997. -- 350 p.
\bibitem{12}
B.Simon, {\it The $P(\vf)_2$ Euclidian (Quantum) Field Theory}. -- Princeton
Univ. Press. -- Princeton. -- 1974. -- 358 p.
\bibitem{13}
A.A.Dorogovtsev, V.V.Bakun, {\it Random mappings and a Generalized additive
functional of a Wiener process}. -- Theory of Probability and its
Applications. -- 2004. -- Vol.48, no.1. -- 63--79.
\end{thebibliography}
\end{document}